\newtheorem{theorem}{Theorem}[section]
\newtheorem{lemma}[theorem]{Lemma}
\theoremstyle{definition}
\newtheorem{definition}[theorem]{Definition}
\newenvironment{proofof}[1]{\par\noindent{\em Proof of #1.}}%
                        {\hspace*{\fill}\nobreak$\Box$\par\medskip}
\author{Lycka Drakengren}
\newcommand{\mylongtitle}[1]{%
  \ifodd\value{page}%
    \protect\parbox{0.97\linewidth}{#1}\hfill%
  \else%
    \hfill\protect\parbox{0.97\linewidth}{#1}%
  \fi%
}
\begin{document}

\title[\mylongtitle{Fundamental Groups of 2-Complexes with \\ Nonpositive Planar Sectional Curvature}]{Fundamental Groups of 2-Complexes with \\ Nonpositive Planar Sectional Curvature}

\maketitle
\begin{abstract}
We show that the finite simply connected 2-complexes of nonpositive planar sectional curvature are collapsible. Moreover, we show that each finite connected 2-complex with negative planar sectional curvature and fundamental group $\mathbb{Z}$ can be collapsed to a 1-dimensional cycle.
\end{abstract}

\vspace{8pt}
\section{Introduction}
Cell complexes with nonpositively curved metrics have been of great interest to many researchers in geometric group theory. As a consequence, several results are known concerning the fundamental groups of nonpositively curved 2-complexes (see e.g. \cite{OP}, \cite{BB}).

For Euclidean complexes, the condition of being nonpositively curved can be checked using Gromov's link condition. For 2-complexes, Gromov's link condition can be stated in terms of angles: A Euclidean 2-complex is nonpositively curved if and only if each cycle in the vertex links has length $\geq 2\pi$.

We therefore find it natural to consider 2-complexes with positive angles assigned to the corners of each face, without imposing a metric. These are referred to as \textit{angled 2-complexes}.

The notion of \textit{nonpositive planar sectional curvature} for angled 2-complexes was introduced by Wise in \cite{Wi}, imposing the condition that planar sections at each vertex have an angle sum $\geq 2\pi$. The definition originates in the so called \textit{Gersten-Pride weight test} (\cite[p.37]{ger}). For this test, we metrize the vertex links of a 2-complex, defining the length of an edge in the link to be the size of the angle at the corresponding corner in the 2-complex. The Gersten-Pride weight test is then satisfied if every cycle in the link of any vertex has length $\geq 2 \pi$.

If angles are assumed to be nonnegative, an angled 2-complex has nonpositive planar sectional curvature if and only if it satisifies the weight test \cite[p.4]{Wis03}. We will here assume that angles are positive, and refer to the angled 2-complexes with nonpositive planar sectional curvature as \textit{conformally nonpositively curved 2-complexes}.

The property of being conformally nonpositively curved turns out to be less restrictive than the property of having a nonpositively curved metric. The presentation complexes of Baumslag Solitar groups are examples of 2-complexes which can be conformally nonpositively curved without admitting a nonpositively curved metric \cite[p.15]{cam}.

In this paper, we contribute to the classification of finite conformally nonpositively curved 2-complexes according to their fundamental groups. 

We will denote a \textit{free face} of a 2-complex to be a cell $c$ of dimension $n=0$ or $1$ with the following properties: The cell $c$ is adjacent to one cell of dimension $n+1$, and not to any other cells of dimension $\geq n+1$. Moreover, the attaching map of the adjacent $(n+1)$-cell is a homeomorphism on the preimage of the interior of $c$.

An \textit{elementary collapse} of a 2-complex is defined to be the removal of the interiors of a free face of dimension $n$ and its adjacent $(n+1)$-cell. A \textit{collapse} of a 2-complex will here denote a sequence of elementary collapses. A 2-complex is \textit{collapsible} if it can be collapsed to a single point.

The first theorem proved in this paper, stated below, classifies the conformally nonpositively curved 2-complexes whose fundamental groups are trivial.

\begin{theorem}\label{1thm}
   A finite, simply connected, conformally nonpositively curved 2-complex is collapsible.
\end{theorem}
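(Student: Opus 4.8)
The plan is to run an induction on the number of cells, using the combinatorial Gauss--Bonnet theorem for angled $2$--complexes as the engine. Recall that this theorem assigns to each $2$--cell $f$ and each vertex $v$ a curvature
\[
\kappa(f)=\Big(\sum_{c\text{ a corner of }f}\theta_c\Big)-\big(|\partial f|-2\big)\pi,
\qquad
\kappa(v)=2\pi-\pi\,\chi\big(\mathrm{Lk}(v)\big)-\sum_{c\text{ a corner at }v}\theta_c,
\]
where $\theta_c$ is the angle at the corner $c$, and that these satisfy $\sum_f\kappa(f)+\sum_v\kappa(v)=2\pi\,\chi(X)$. The conformal nonpositive curvature hypothesis supplies a positive angle assignment for which every cycle in every vertex link has length at least $2\pi$ and, viewing each face as a planar section, for which $\kappa(f)\le 0$ for every $2$--cell $f$. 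Since $X$ is simply connected we have $b_1(X)=0$, so $\chi(X)=1+b_2(X)\ge 1$.

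The whole argument reduces to the following claim: \emph{every finite, connected, simply connected, conformally nonpositively curved $2$--complex with at least one cell of positive dimension admits a free face.} Granting this, the theorem follows by induction. Indeed, an elementary collapse is a deformation retraction, so it preserves finiteness, connectivity and simple connectivity; and it only deletes cells, so it can only remove edges and vertices from vertex links (never shortening a surviving cycle) and remove a face (never affecting the inequality $\kappa(f)\le0$ for a surviving face). Hence an elementary collapse preserves all of the hypotheses, and repeatedly collapsing a free face strictly decreases the number of cells until we reach a connected complex with no positive--dimensional cells, namely a point.

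To prove the claim I would argue by contradiction: suppose no free face exists. For each edge $e$ let $m(e)$ be the number of times the attaching maps of the $2$--cells traverse $e$; the absence of a free edge means $m(e)\neq 1$ for every $e$. A direct check shows that the link vertex associated with an end of $e$ has degree exactly $m(e)$, so after discarding isolated link vertices (which correspond to edges lying on no $2$--cell, and which only decrease $\kappa(v)$) every vertex of every link has degree at least $2$. The key point is then a purely graph--theoretic estimate: if $L$ is a finite graph of minimum degree at least $2$ with positive edge lengths in which every cycle has length at least $2\pi$, then the total length of $L$ is at least $2\pi-\pi\,\chi(L)$. Combined with the description of the links (and noting that the absence of a free vertex rules out the remaining low--degree exceptional cases), this estimate gives $\kappa(v)\le 0$ at every vertex. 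Together with $\kappa(f)\le 0$ for every face, Gauss--Bonnet yields $2\pi\,\chi(X)=\sum_f\kappa(f)+\sum_v\kappa(v)\le 0$, contradicting $\chi(X)\ge 1$.

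I expect the graph--theoretic estimate to be the main obstacle. Its extremal cases (a single cycle, and the generalized theta graphs obtained by joining two vertices with several internally disjoint arcs) are tight, so the bound is sharp and cannot be proved by a crude counting argument. I would approach it by first suppressing all degree--$2$ vertices and discarding bridges (which contribute length but not to $b_1$), reducing to a $2$--edge--connected graph of minimum degree at least $3$, and then inducting on the first Betti number $b_1(L)$; the cleanest route is probably to reinterpret the bound, via linear programming duality, as the statement that the maximum fractional cycle packing of a $2$--edge--connected graph is at least $\tfrac12\big(1+b_1(L)\big)$. A secondary technical point to handle with care is the exact dictionary between the combinatorial notion of a free face and the degree and isolated--vertex structure of the links, including the bookkeeping for loops in a link and for edges that bound no $2$--cell.
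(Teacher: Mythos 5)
Your overall reduction---``if no free face exists, derive a contradiction with Gauss--Bonnet, then collapse inductively''---matches the paper's skeleton, but the ``purely graph-theoretic estimate'' on which your contradiction rests is false, and it is false in an essential way. Take $L$ to be the Petersen graph with every edge given length $2\pi/5$. It has minimum degree $3$ (so it survives all of your reductions: it is $3$-edge-connected and has no degree-$2$ vertices or bridges), and since its girth is $5$, every cycle has length at least $5\cdot 2\pi/5=2\pi$; yet its total length is $15\cdot 2\pi/5=6\pi$, whereas $2\pi-\pi\chi(L)=2\pi+5\pi=7\pi$. In your LP reformulation: giving each edge length $1/5$ is feasible for the covering LP with value $3$, so by weak duality the maximum fractional cycle packing of the Petersen graph is at most $3<\tfrac12\bigl(1+b_1(L)\bigr)=3.5$. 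Consequently the local implication your argument needs---absence of free faces plus the link condition forces $\kappa(v)\le 0$---fails: a vertex whose link is this metric Petersen graph satisfies the link condition, exhibits no free face locally, and yet has $\kappa(v)=2\pi-\pi(-5)-6\pi=\pi>0$. Since any finite graph occurs as a vertex link of some $2$-complex, no additional structural property of links can rescue the estimate. This is precisely the gap between nonpositive \emph{planar} sectional curvature (only cycles in the link, i.e.\ planar sections, are constrained) and Wise's stronger nonpositive sectional curvature (which demands $2\pi-\pi\chi(S)-\mathrm{length}(S)\le 0$ for much more general subgraphs $S$ of the link); the theorem assumes only the former, so curvature simply cannot be controlled vertex-by-vertex on $X$.

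Because of this, a one-shot application of Gauss--Bonnet to $X$ itself cannot work; the paper's argument is necessarily global, and the globalization is the real content. Assuming no free faces, the paper constructs a \emph{straight segmental path} through the $2$-cells of $X$: a lemma on links shows that the absence of free faces lets one always continue straight across an edge or through a vertex (for each direction of arrival there is a point of the link at distance $\ge\pi$). Finiteness forces the path to self-intersect or return to an edge, whence one closes it up into an embedded null-homotopic loop $\gamma$ and takes a \emph{reduced Van Kampen diagram} $D$ for $\gamma$. The decisive point is that $D$ is a planar disc: there the links of interior vertices are circles and the links of boundary vertices are arcs, so the link condition of $X$, pulled back along the near-immersion $D\to X$, genuinely yields $\kappa\le 0$ at every vertex of $D$ except at most two exceptional boundary vertices, whose curvatures sum to strictly less than $2\pi$ by straightness of the path. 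This contradicts the planar Gauss--Bonnet identity $\sum_{v}\kappa(v)=2\pi$ for $D$. Some such passage to a planar diagram (or another global mechanism) is unavoidable; your local estimate cannot be repaired.
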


Our second theorem, stated below, concerns 2-complexes whose fundamental groups are $\mathbb{Z}$. We will refer to a 2-complex as being \textit{conformally negatively curved} if it satisfies the Gersten-Pride weight test with cycle lengths being strictly greater than $2\pi$. 

\begin{theorem}\label{2thm}
    Let $X$ be a finite, connected, conformally negatively curved 2-complex.
    If $X$ has fundamental group $\mathbb{Z}$, then $X$ can be collapsed to a 1-dimensional cycle.
\end{theorem}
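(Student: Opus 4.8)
The plan is to run a minimal-counterexample argument driven by a combinatorial Gauss--Bonnet formula, isolating exactly the place where strict (as opposed to merely nonpositive) curvature is used. Two preliminary observations make the reduction possible. First, every elementary collapse is a deformation retraction, so it preserves connectivity and the fundamental group; thus every complex obtained from $X$ by collapsing still has $\pi_1=\mathbb{Z}$. Second, an elementary collapse only deletes corners together with the edges and vertices carrying them, so in each vertex link it merely erases edges and never shortens a surviving cycle; hence collapsing preserves the conformally negatively curved hypothesis. Granting these, I would take a counterexample $X$ with the fewest cells: finite, connected, conformally negatively curved, with $\pi_1(X)=\mathbb{Z}$, that cannot be collapsed to a $1$-dimensional cycle. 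If $X$ had a free face, collapsing it would produce a strictly smaller complex satisfying all the same hypotheses, which by minimality collapses to a $1$-dimensional cycle, and then so would $X$; therefore $X$ has \emph{no} free face.

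Next I would dispose of the $1$-dimensional case. If $X$ has no $2$-cell, then ``no free face'' means ``no degree-$1$ vertex,'' so every vertex of the graph $X$ has degree $\geq 2$. A connected graph with $\pi_1=\mathbb{Z}$ has first Betti number $1$, hence $E=V$; combined with $\sum_v \deg v = 2E = 2V$ and minimum degree $\geq 2$, every degree must equal $2$, so $X$ is a single cycle --- already a $1$-dimensional cycle, contradicting that $X$ is a counterexample. Consequently the minimal counterexample $X$ must contain at least one $2$-cell.

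It remains to derive a contradiction from a boundary-free $X$ carrying a $2$-cell, and here I would invoke combinatorial Gauss--Bonnet for angled $2$-complexes: assigning to an $n_f$-gon $f$ with corner-angle sum $\Theta_f$ the curvature $\kappa(f)=\Theta_f-(n_f-2)\pi$, and to a vertex $v$ the curvature $\kappa(v)=2\pi-\pi\,\chi(\mathrm{Lk}(v))-\ell(v)$, where $\ell(v)$ is the total edge-length of $\mathrm{Lk}(v)$, one has $\sum_f\kappa(f)+\sum_v\kappa(v)=2\pi\chi(X)$. Because $X$ has no free face, the vertex links carry no degree-$1$ vertices, and the strict weight test (every link-cycle has length $>2\pi$) forces $\kappa(v)<0$ at every vertex whose link contains a cycle; the goal is to combine this strict vertex negativity with control of the face terms to conclude $\chi(X)<0$. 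This contradicts $\pi_1(X)=\mathbb{Z}$, which gives $\chi(X)=b_0-b_1+b_2=b_2\geq 0$, completing the proof.

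The main obstacle is exactly that last inequality: converting the local, link-based strict negativity of the weight test into the global bound $\chi(X)<0$ for a boundary-free complex containing a $2$-cell. One cannot simply add up signs, since an individual face may carry positive curvature, and the lower-dimensional link features (isolated link-vertices coming from edges on no face) contribute nondefinite terms; the strictness of the hypothesis --- which is precisely what separates this theorem from the nonstrict Theorem~\ref{1thm}, where the strict inequality $\chi\geq 1$ already forces the contradiction --- must be harnessed to kill the borderline case $\chi(X)=0$ that sits exactly where $\pi_1=\mathbb{Z}$ lives. I expect the careful accounting showing that strict vertex negativity dominates the face contributions in the absence of free faces to be the real work, the preceding reductions being routine.
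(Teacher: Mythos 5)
Your preliminary reductions are fine: elementary collapses preserve $\pi_1$ and do not create new link cycles (the paper makes the same observation at the end of the proof of Theorem~\ref{mainthm}), and your treatment of the $1$-dimensional case is correct. The problem is that the step you defer as ``the real work'' is not only missing but rests on a false claim. You assert that, in the absence of free faces, the strict weight test forces $\kappa(v)=2\pi-\pi\chi(Lk_X(v))-S(v)<0$ at every vertex whose link contains a cycle. Counterexample: let $Lk_X(v)$ be the complete bipartite graph $K_{3,3}$ with every edge of length $\pi/2+\epsilon$ (nine squares meeting at $v$, each contributing one corner of angle $\pi/2+\epsilon$ there; this is consistent with the convention that a square has angle sum $2\pi$, since the remaining three corners of each square lie at other vertices). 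Every link vertex has degree $3$, so no free face is created at $v$; every cycle of $K_{3,3}$ has at least $4$ edges, hence length $\geq 2\pi+4\epsilon>2\pi$, so the strict link condition holds at $v$. Yet $\chi(K_{3,3})=6-9=-3$ and $S(v)=9(\pi/2+\epsilon)$, so
\[
\kappa(v)=2\pi+3\pi-\tfrac{9\pi}{2}-9\epsilon=\tfrac{\pi}{2}-9\epsilon>0
\]
for small $\epsilon$. So vertex curvature can be strictly positive under exactly your hypotheses, and summing curvatures over $X$ itself cannot yield $\chi(X)<0$. This failure is not incidental: $K_{3,3}$ is non-planar, and the weight test is equivalent only to nonpositive curvature of \emph{planar} sections of links --- which is precisely why the invariant is called planar sectional curvature.

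The paper's proof is built to circumvent exactly this: Gauss--Bonnet (Theorem~\ref{comb}) is never applied to $X$, only to van Kampen diagrams, which are planar, so that links of diagram vertices are planar graphs mapping to links of $X$ by a near-immersion, and cycles pull back to cycles of controlled length. Moreover, the hypothesis $\pi_1(X)=\mathbb{Z}$ enters far more substantially than through $\chi(X)\geq 0$, which is all your outline retains of it. The paper splits into two cases according to whether some edge meets three or more $2$-cells: if not, the union of the $2$-cells is a compact surface with finitely many points identified, and since a compact surface cannot have fundamental group $\mathbb{Z}$, Theorem~\ref{mainthm} produces a free face; if so, one builds three straight proper rays in the universal cover starting across that edge, uses the Schwarz--Milnor lemma and two-endedness of the $\mathbb{Z}$-cover to force two rays into the same end, joins them by a uniformly short path, and applies Gauss--Bonnet to a reduced van Kampen diagram spanning the resulting loop to get a contradiction for long rays. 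Any repair of your strategy would need to reintroduce both ingredients your outline discards: planarity (work in diagrams, not in $X$) and the geometry of the two-ended universal cover.
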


While Theorem \ref{2thm} gives a classification of finite 2-complexes with fundamental group $\mathbb{Z}$ being conformally negatively curved, we do not know whether there are similar results for complexes which are only assumed to be conformally nonpositively curved.

The structure of the paper is as follows: Section 2 provides a background to concepts and invariants related to 2-complexes. Theorem \ref{1thm} is proved in Section 3, using a notion of straight lines as in \cite[p.450]{Wi} and an application of the Combinatorial Gauss-Bonnet Theorem as in \cite[p.455]{Wi}. In Section $4$, we prove Theorem \ref{2thm} by considering the $2$ ends of the universal cover of a conformally negatively curved 2-complex with fundamental group $\mathbb{Z}$. We prove that there cannot be a straight line whose ends belong to the same end of the universal cover, hence excluding the case where an edge in the complex has $3$ or more adjacencies with 2-cells. The remaining case follows from Theorem \ref{1thm} and the fact that the fundamental group of a compact surface cannot be $\mathbb{Z}$.

\section*{Acknowledgements}
This project was conducted under the Summer Research in Mathematics Programme, and received funding by the Trinity Summer Studentship Scheme. I owe thanks to my advisor Henry Wilton for his expert advice and excellent support.

\section{Background}

In this section, we explain the fundamental concepts which will be used in subsequent sections. Most definitions are based on terminology by Bridson and Haefliger \cite{BH}, and Wise \cite{Wi}.

\begin{definition}
    A \textit{(combinatorial) 2-complex} $X$ is constructed by gluing together cells of dimensions $0$, $1$, or $2$ by maps which are homeomorphisms on interiors of cells. The cells of dimensions $0$, $1$ and $2$ are the \textit{vertices}, \textit{edges} and \textit{faces}, respectively, of the 2-complex.
\end{definition}

For our purposes, we will from now on assume that the 2-complexes are connected.

\begin{definition}
    Define the \textit{link} of a vertex $v$ in a 2-complex $X$, denoted $Lk_X(v)$, to be a multigraph, whose vertices correspond to the edges adjacent to $v$ in X. Two (possibly equal) vertices in $Lk_X(v)$ are adjoined by an edge for each corner of a 2-cell the corresponding two edges in $X$ define.
\end{definition}

\begin{definition}
    An \textit{angled 2-complex} will here denote a 2-complex where the corners of each face are prescribed a positive angle, such that the angles in a face with $n$ corners sum up to $(n-2)\pi$.
\end{definition}

Each connected component of the link of a vertex $v$ will be assigned a metric, as follows: Each edge in $Lk_X(v)$ will be given a length equal to the value of the angle at the vertex $v$ for the corresponding corner of a face in $X$. The length between two vertices in the same component of $Lk_X(v)$ will be given by the minimal sum of the lengths of edges in a path connecting them.

\begin{definition}\label{linkc}
    A vertex $v$ in a 2-complex $X$ is said to satisfy the \textit{link condition} if the length of each cycle in $Lk_X(v)$ is $\geq 2\pi$. 
\end{definition}

\begin{definition}
    An angled 2-complex $X$ is said to have \textit{nonpositive planar sectional curvature} if all vertices of $X$ satisfy the link condition. In that case, we will refer to $X$ as a \textit{conformally nonpositively curved complex}. Moreover, if the inequality in Definition \ref{linkc} is strict for each vertex in $X$, we will refer to $X$ as a \textit{conformally negatively curved complex}.
\end{definition}

Note that Wise's definition of an angled 2-complex in \cite{Wi} is slightly more general, with angles allowed to be chosen arbitrarily or taking negative values. We will only be interested in the case of positive angles here. Furthermore, the more general definitions of angled 2-complexes and nonpositive planar sectional curvature allow the angles of a face with $n$ corners to sum up to a value smaller than $(n-2)\pi$. For our purposes however, by increasing angles if necessary, we can without loss of generality assume that the angle sum of a face with $n$ corners is equal to $(n-2)\pi$.

\begin{definition}
    A \textit{combinatorial map} is a continuous map $\tau\colon X \to Y$ between 2-complexes, whose restriction to the interior of single cell in $X$ is a homeomorphism onto the interior of a cell in $Y$.
\end{definition}

We will now define the notion of a straight line segment on a face $f$ in an angled 2-complex. Note that a face $f$ can be equipped with a combinatorial map from a Euclidean polygon $T$ such that angles are preserved. We will refer to $T$ as a \textit{Euclidean polygon associated to f}.

\begin{definition}\label{hom}
    Equip each face $f_j$ of an angled 2-complex with an associated Euclidean polygon $T_j$ and a combinatorial map $\theta_j\colon T_j \to f_j$. A \textit{line segment} on $f_j$ is defined to be the image of a Euclidean line segment on $T_j$ under the map $\theta_i$.
\end{definition}

The following definitions will be used in \textit{Van Kampen's lemma}, relating nullhomotopic edge loops in $X$ to boundaries of simply connected 2-complexes which are homeomorphic to a subset of the Euclidean plane.

\begin{definition}
    A 2-complex is \textit{planar} if it is homeomorphic to a subset of the Euclidean plane.
\end{definition}

\begin{definition}
    A \textit{singular disc diagram} is a simply connected planar 2-complex.
\end{definition}

\begin{definition}
    An \textit{edge path} in a 2-complex $X$ is a sequence of edges adjoined by vertices. An \textit{edge loop} is an edge path which is a circuit.
\end{definition}

\begin{definition}
 A \textit{Van Kampen diagram} for an edge loop $\gamma$ in a 2-complex $X$ is a singular disc diagram $D$ which admits a combinatorial map $\tau\colon D \to X$ such that image of the boundary circuit of $D$ traces out $\gamma$.
\end{definition}

\begin{definition}
 Let $D$ be a van Kampen diagram for a nullhomotopic loop $\gamma$ in a 2-complex $X$, and $\tau\colon D \to X$ its associated map. Let $f_1$, $f_2$ be a pair of faces of $D$ sharing an edge $e$. Let $p_1$ and $p_2$ be the paths tracing out the boundaries of $f_1$ and $f_2$, respectively, starting at the same vertex of $e$ and first traversing $e$. The pair $f_1$, $f_2$ is said to be a \textit{cancellable pair} if the paths $p_1$ and $p_2$ are mapped to the same path in $X$ under $\tau$. The diagram $D$ is said to be \textit{reduced} if it has no cancellable pairs.
\end{definition}

We now state Van Kampen's lemma (\cite[p.442]{Wi}).

\begin{lemma}[Van Kampen's lemma]
    Each nullhomotopic edge loop in a 2-complex $X$ admits a reduced Van Kampen diagram.
\end{lemma}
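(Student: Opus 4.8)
The plan is to prove the statement in two stages: first produce \emph{some} Van Kampen diagram for $\gamma$, and then argue that a diagram of this kind with the fewest possible faces is automatically reduced.

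For existence, I would pass to the algebraic description of $\pi_1(X)$. Fix a spanning tree in the $1$-skeleton of $X$; then $\pi_1(X)$ is presented with one generator for each edge outside the tree and one relator for each face, the relator being the word that the attaching map of the face spells out in these generators. Reading the edge labels of $\gamma$ gives a word $w$ in the free group $F$ on the edge generators, and saying that $\gamma$ is nullhomotopic is exactly saying that $w$ represents the trivial element. Hence in $F$ we have an equality $w = \prod_{i=1}^{n} u_i\, r_i^{\pm 1}\, u_i^{-1}$, where each $r_i$ is a cyclic conjugate of the boundary word of some face and each $u_i$ is a word in $F$. From such an expression one builds the standard ``cactus'' (lollipop) singular disc diagram $D_0$: for each factor $u_i r_i^{\pm 1} u_i^{-1}$ one attaches, at a common base vertex, a path (``stick'') spelling $u_i$ whose far end is the basepoint of a $2$-cell whose boundary spells $r_i^{\pm 1}$. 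The wedge of these lollipops at the base vertex is a simply connected planar $2$-complex, and it carries an obvious combinatorial map to $X$ sending each stick and each $2$-cell to the corresponding edges and face. Tracing the boundary circuit of $D_0$ reads the product $\prod_i u_i r_i^{\pm1} u_i^{-1}$, which equals $w$ in $F$; after collapsing the spurs coming from free cancellations (each such spur being removed by an elementary collapse of its free endpoint, a free face of dimension $0$) the boundary traces out $\gamma$. This exhibits a Van Kampen diagram for $\gamma$.

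It remains to obtain a \emph{reduced} diagram. Among all Van Kampen diagrams for $\gamma$ choose one, $D$, with the minimal number of $2$-cells; I claim $D$ is reduced. If not, $D$ contains a cancellable pair $f_1, f_2$ meeting along an edge $e$, whose boundary paths $p_1, p_2$ (starting at the same endpoint of $e$ and first crossing $e$) have the same image under $\tau$. One then excises the open cells of $f_1$, $f_2$ and $e$, and glues the remainder of $\partial f_1$ to the remainder of $\partial f_2$ according to the identification forced by $\tau$, that is, matching $p_1$ with $p_2$. The result is again a simply connected planar complex mapping combinatorially to $X$ with the same boundary word $\gamma$, but with two fewer faces, contradicting minimality. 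Hence the minimal diagram is reduced.

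The step I expect to require the most care is matching the boundary of the constructed diagram \emph{exactly} to the edge loop $\gamma$, together with the surgery in the reduction step. The cactus boundary only spells a word freely equal to $w$, so one must account for free cancellation by collapsing spurs without disturbing the genuine backtrackings already present in $\gamma$; and in the reduction step one must check that the ``sewing up'' after excising a cancellable pair really yields a planar, simply connected complex rather than creating extra handles or disconnecting the diagram. Verifying that these surgeries preserve planarity and simple connectivity, while leaving the boundary circuit unchanged, is the technical heart of the argument.
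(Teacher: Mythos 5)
The first thing to say is that the paper does not prove this lemma at all: it is stated as a quotation from \cite[p.442]{Wi}, so there is no in-paper argument to compare yours against. Your proposal reconstructs the standard classical proof, which is essentially the one underlying the cited literature. The existence half is correct in outline: express the boundary word as a product of conjugates of face relators, build the wedge-of-lollipops diagram, and clean up the boundary. Two routine technicalities are glossed over, though. The clean-up of the boundary word is done by \emph{folding} together consecutive boundary edges that form a backtrack (identifying two distinct edges), which is not in general an elementary collapse of a spur; and your presentation-based construction naturally yields a diagram mapping to the presentation complex on the non-tree edges, i.e.\ to $X$ with the spanning tree collapsed, so one must reinstate tree edges to obtain a genuine combinatorial map to $X$ itself. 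Both points are standard and fixable.

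The reduction half is where the real issue lies, and it lies exactly where you said it would. Taking a diagram with the fewest faces and excising a cancellable pair is the standard strategy, but the surgery as you describe it can literally fail: if the boundaries of $f_1$ and $f_2$ share edges or vertices other than $e$ (which genuinely happens in singular disc diagrams), then excising $f_1\cup e\cup f_2$ and sewing $p_1$ to $p_2$ may force an edge to be identified with its own reverse, pinch the diagram, or split off a spherical component, so the quotient need not be a planar, simply connected complex with unchanged boundary. This is a well-documented gap in many classical write-ups of exactly this argument (see McCammond and Wise, \emph{Fans and ladders in small cancellation theory}, where the correct treatment is given); it can be repaired, for instance by performing the cancellation at the level of the product-of-conjugates expression and rebuilding the diagram, or by analyzing the degenerate overlap configurations separately and discarding sphere components. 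Since your proposal asserts the conclusion of the surgery rather than proving it, and since the assertion is false without modification in the degenerate cases, this step is a genuine hole rather than a deferred verification.
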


After one more definition, we conclude this section with a special case of the \textit{Combinatorial Gauss-Bonnet Theorem} stated in \cite{Wi}.

\begin{definition}
Denote the Euler characteristic of a graph $G$ by $\chi(G)$. Let $X$ be an angled 2-complex. 
For a vertex $v$ of $X$, we let $S(v)$ denote the sum of the angles at $v$. We define the \textit{curvature of v} to be the quantity $\kappa(v) = 2\pi-\pi\cdot\chi(Lk_X(v))-S(v)$.
\end{definition}

\begin{theorem}\label{comb}
Let $V(X)$ denote the vertex set of a planar, finite angled 2-complex $X$. If $X$ is simply connected, we have the equality $\sum_{v\in V(X)}\kappa(v) = 2\pi$.
\end{theorem}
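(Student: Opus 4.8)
The plan is to sum the defining expression for $\kappa(v)$ over all vertices and to check that the three resulting aggregate quantities recombine into $2\pi\,\chi(X)$; the theorem then follows because a finite, simply connected planar 2-complex has $\chi(X)=1$. Denoting the numbers of vertices, edges and faces by $|V(X)|$, $|E(X)|$, $|F(X)|$, expanding the definition of $\kappa$ gives
\[
\sum_{v\in V(X)}\kappa(v)=2\pi\,|V(X)|-\pi\!\!\sum_{v\in V(X)}\!\!\chi(Lk_X(v))-\!\!\sum_{v\in V(X)}\!\!S(v).
\]
From here the argument is entirely a matter of evaluating the last two sums by double counting.

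For the angle term, I would reorganize $\sum_v S(v)$ as a sum over faces rather than over vertices: each corner of a face contributes its angle to exactly one summand $S(v)$, so $\sum_v S(v)$ equals the sum of the angle sums of all faces. By the definition of an angled 2-complex, a face with $n$ corners contributes $(n-2)\pi$, and hence $\sum_v S(v)=\pi C-2\pi\,|F(X)|$, where $C=\sum_f n_f$ is the total number of corners of $X$. For the link term, I would count vertices and edges of each $Lk_X(v)$ separately: the vertices of $Lk_X(v)$ are indexed by the ends at $v$ of edges incident to $v$, so summing over all $v$ counts each edge twice and yields $2\,|E(X)|$, while the edges of $Lk_X(v)$ are indexed by the corners of faces at $v$, so summing over all $v$ yields $C$. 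Thus $\sum_v \chi(Lk_X(v))=2\,|E(X)|-C$.

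Substituting both evaluations into the expansion, the two copies of $\pi C$ cancel and the remaining terms collapse to
\[
\sum_{v\in V(X)}\kappa(v)=2\pi\,|V(X)|-2\pi\,|E(X)|+2\pi\,|F(X)|=2\pi\,\chi(X).
\]
It then remains to identify $\chi(X)$. Since $X$ is connected, $H_0(X)\cong\mathbb{Z}$; since $X$ is simply connected, $H_1(X)=0$; and since $X$ is planar it carries no nontrivial $2$-cycle, so $H_2(X)=0$. Equivalently, a simply connected planar 2-complex is a singular disc diagram and is therefore contractible. Hence $\chi(X)=1$ and the sum equals $2\pi$.

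I expect the main obstacle to be the bookkeeping in the link count rather than any deep idea: one must verify that the correspondences between link vertices and edge-ends, and between link edges and face corners, remain exact in the degenerate situations permitted by a combinatorial complex — edges that are loops at a single vertex, and faces whose attaching map identifies several corners or edges at the same vertex. Checking that every such incidence is still counted with the correct multiplicity is precisely what makes the clean identities $\sum_v(\text{link vertices})=2\,|E(X)|$ and $\sum_v(\text{link edges})=C$ valid. A secondary point requiring care is the justification that planarity forces $H_2(X)=0$, which is what pins down the value $\chi(X)=1$.
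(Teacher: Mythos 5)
The paper never proves this statement at all: it is imported as a special case of the Combinatorial Gauss--Bonnet Theorem of Wise \cite{Wi}, so there is no internal proof to compare yours against. Your double-counting argument is correct, and it is essentially the standard proof of Wise's theorem specialized to this paper's conventions. In Wise's general form the identity reads $\sum_f \kappa(f) + \sum_v \kappa(v) = 2\pi\chi(X)$, where the curvature of a face with $n$ corners is its angle sum minus $(n-2)\pi$; the paper's normalization that every such face has angle sum exactly $(n-2)\pi$ makes all face curvatures vanish, and your computation
\[
\sum_{v}\kappa(v)=2\pi|V(X)|-\pi\bigl(2|E(X)|-C\bigr)-\bigl(\pi C-2\pi|F(X)|\bigr)=2\pi\chi(X)
\]
is exactly what remains. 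The two caveats you flag are indeed the only delicate points, and both resolve as you expect: the paper's phrase ``vertices [of the link] correspond to the edges adjacent to $v$'' must be read as edge-ends (so an edge forming a loop at $v$ contributes two link vertices), which is the convention under which $\sum_v \#\{\text{link vertices}\}=2|E(X)|$ survives loops and degenerate attaching maps; and $\chi(X)=1$ holds because $X$ is connected, $H_1(X)=0$ by simple connectivity, and $H_2(X)=0$ by planarity --- for instance, $H_2(X)$ is free since $X$ is a 2-complex, while $H^2(X)\cong\mathrm{Hom}(H_2(X),\mathbb{Z})$ vanishes for a compact proper subset of $S^2$ by Alexander duality (equivalently, a singular disc diagram is contractible). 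So your proposal supplies a complete, self-contained proof of a statement the paper only cites.
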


\section{Collapsibility}

In this section we show that the finite, simply connected conformally nonpositively curved complexes are collapsible. The proof will require some additional definitions, which are stated below.

\begin{definition}
    An \textit{immersion} between 2-complexes is a locally injective combinatorial map between the complexes. Let $V(X)$ be the vertex set of a 2-complex $X$. A \textit{near-immersion} between 2-complexes is a combinatorial map $\tau \colon X \to Y$ such that $\tau$ is locally injective on $X \backslash V(X)$.
\end{definition}

\begin{definition}
    An \textit{immersed walk} in a (multi)graph will be a walk where no traversed edge is immediately followed by the same edge traversed in the reverse direction.
\end{definition}

\begin{definition}
    A \textit{segmental subdivision} of an angled 2-complex $X$ will be denoted a subdivision $X'$ of $X$, with the following properties:

    1. The edges of $X'$ are line segments in $X$.
    
    2. For each corner in $X'$, consider the face $f$ of $X$ the corner belongs to. The angle of the corner is defined to be the angle between the corresponding line segments in the Euclidean polygon associated to $f$.
    
    Thus $X'$ naturally has the structure of an angled 2-complex.
    
\end{definition}

We now describe how to obtain the vertex links in a segmental subdivision $X'$ of a 2-complex $X$. If a vertex $v$ comes from a vertex of $X$, the link $Lk_{X'}(v)$ is obtained as follows: We take the link of $v$ in $X$ and add vertices corresponding to the new edges of $X'$ meeting $v$. Note that a subdivided angle in $X$ consists of angles in $X'$ summing up to the value of the original angle. Consequently, the edges $Lk_X(v)$ will be subdivided into new edges with lengths adding up to the length of the initial edge.

For a vertex $v$ of $X'$ lying in the interior of an edge $e$ in $X$, the link $Lk_{X'}(v)$ is constructed as follows: Take two vertices $x$ and $y$ corresponding to the two edges in $X'$ adjacent to $v$ along $e$. Adjoin $x$ and $y$ by an edge for each adjacency of $e$ with a face $f$ in $X$, noting that the same face can be adjacent to $e$ in multiple ways. These edges will be assigned the length $\pi$. Subdivide each of these edges by adding vertices for each edge in $X'$ meeting $v$, and give lengths to the new segments according to the angles of corners in $X'$. Again, the lengths of the smaller segments will add up to the length of the original segment. A cycle in the link will consist of two paths between $x$ and $y$, and will thus have length $2\pi$.

If $v$ is a vertex of $X'$ lying in the interior of a face of $X$, the link will be a cycle of length $2\pi$ subdivided according to the edges of $X'$ meeting $v$.

In particular, if $X$ is conformally nonpositively curved, a segmental subdivision $X'$ will also be conformally nonpositively curved with the inherited angle structure.

We will also need a notion of paths on a 2-complex which consist of concatenated line segments. 

\begin{definition}\label{pathray}
    A \textit{segmental path} on a 2-complex $X$ will here denote an edge path in a segmental subdivision of $X$.
\end{definition}

\begin{definition}
    Consider a segmental path $p$ on $X$ and a corresponding subdivision $X'$ of $X$ in which $p$ is an edge path. Assume that $p$ consists of the ordered sequence of vertices $(v_i)_{i\geq 0}$. Let $e_i$ be the edge between $v_i$ and $v_{i+1}$ for $i\geq0$. For $v_i$ not an endpoint of $p$, the segmental path is said to be \textit{straight at $v_i$} if the points in $Lk_{X'}(v_i)$ corresponding to $e_{i}$ and $e_{i+1}$ lie at a distance $\geq \pi$ apart.

    If the segmental path $p$ is straight at $v_i$ for all $i> 0$ such that $v_i$ is not an endpoint of $p$, we say that $p$ is \textit{straight}.
    
    \end{definition}

The main result (same as Theorem \ref{1thm}) in this section is stated below, followed by a lemma that will be used in the proof.

\begin{theorem}\label{mainthm}
    A finite, simply connected conformally nonpositively curved complex is collapsible.
\end{theorem}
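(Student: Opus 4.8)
The plan is to prove collapsibility by induction on the number of faces, using the Combinatorial Gauss-Bonnet Theorem (Theorem~\ref{comb}) to locate a free face that can be removed without destroying the conformal nonpositive curvature. The key strategic idea is that the hypotheses—finite, simply connected, and conformally nonpositively curved—are inherited after an appropriate elementary collapse, so that if we can always find \emph{one} legal elementary collapse, induction finishes the argument. Since a single point is trivially collapsible (the base case), everything reduces to an existence statement: in any nontrivial such complex there is a free face whose removal preserves all three hypotheses.

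\medskip

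First I would pass to the universal cover, or rather work directly with $X$ itself after noting that simple connectivity lets us apply Theorem~\ref{comb}. The Gauss-Bonnet equality $\sum_{v}\kappa(v) = 2\pi$ forces some vertex $v$ to have strictly positive curvature $\kappa(v) = 2\pi - \pi\chi(Lk_X(v)) - S(v) > 0$. The plan is to extract geometric consequences from $\kappa(v)>0$: since the link condition guarantees every cycle in $Lk_X(v)$ has length $\geq 2\pi$, and since $S(v)$ is the total edge-length of $Lk_X(v)$, positivity of $\kappa(v)$ constrains the combinatorial type of the link. Concretely, I expect to argue that positive curvature forces $Lk_X(v)$ either to be a tree (so $\chi = 1$ and the link has no cycles at all, meaning $v$ sits on a ``boundary'' where some edge or face is free) or to be very sparse. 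The notion of \emph{straight line}, set up via segmental subdivisions and the ``straight at $v_i$'' condition, is the tool for this: a straight segmental path corresponds to geodesic behaviour in the links, and the absence of short cycles means straight lines cannot close up or create the obstructions to collapsing. I would use straightness to trace out a free face—an edge or vertex meeting essentially one face—emanating from the positively curved vertex.

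\medskip

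Once a free face is identified, I would verify that the elementary collapse removing it preserves the three inductive hypotheses. Finiteness and simple connectivity are immediate, since an elementary collapse is a deformation retract and strictly reduces the cell count. The delicate point is that conformal nonpositive curvature must survive: removing a free face of dimension $1$ deletes an edge and its adjacent $2$-cell, which modifies the links of the vertices at the ends of that edge. Here I would invoke the segmental subdivision machinery to recompute the affected links and check that every remaining cycle still has length $\geq 2\pi$; the fact that the collapsed cell was free (adjacent to exactly one higher cell, attached by a homeomorphism on the relevant preimage) is what guarantees that the removal only \emph{shortens} or deletes paths in the links without creating new short cycles.

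\medskip

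The main obstacle I anticipate is precisely this last verification: showing that conformal nonpositive curvature is preserved under the collapse, rather than merely that a free face exists. Exhibiting a free face from Gauss-Bonnet is a counting-and-geometry argument that I expect to be manageable, but ensuring that no vertex link acquires a cycle of length $<2\pi$ after surgery is subtle, because deleting an edge can merge two link components or reroute cycles through shorter paths. I expect to control this by carefully choosing \emph{which} free face to collapse—presumably one found at the end of a maximal straight line—so that the combinatorial structure near it is forced to be simple (a single corner, a leaf in the link), making the link recomputation clean. Proving that such a well-positioned free face always exists, using straightness together with the strict positivity $\kappa(v)>0$ furnished by Gauss-Bonnet, is the crux of the argument.
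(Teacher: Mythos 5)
Your proposal has two genuine gaps, and the second one is fatal to the whole strategy. First, the planarity issue: Theorem~\ref{comb} as stated applies only to \emph{planar}, finite, simply connected angled 2-complexes, but you invoke it for $X$ itself, which is only assumed finite and simply connected. A simply connected 2-complex need not be planar (three 2-cells glued along a common edge, or a 2-sphere, already fail), so the step ``Gauss-Bonnet forces some vertex of $X$ to have $\kappa(v)>0$'' is not licensed by the stated theorem. This is why the paper never applies Theorem~\ref{comb} to $X$: it applies it only to a reduced Van Kampen diagram $D$, which is planar by construction. (Your step could be repaired by the general combinatorial Gauss-Bonnet identity $\sum_v \kappa(v) = 2\pi\chi(X)$, valid here because face angle sums equal $(n-2)\pi$ so face curvatures vanish, together with $\chi(X)\geq 1$; but that is a different, stronger statement than the one in the paper.)

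The fatal gap is your claimed implication ``$\kappa(v)>0$ forces $Lk_X(v)$ to be a tree or very sparse, hence a free face can be traced out near $v$.'' This is false. Take $Lk_X(v)$ to be the Petersen graph with every edge of length $2\pi/5$: its girth is $5$, so every cycle has length $\geq 2\pi$ and the link condition holds; yet $\chi(Lk_X(v)) = 10-15 = -5$ and $S(v) = 15\cdot 2\pi/5 = 6\pi$, so $\kappa(v) = 2\pi + 5\pi - 6\pi = \pi > 0$, while the link is $3$-regular and has no leaves whatsoever. This situation is realized by the cone on the Petersen graph (apex angle $2\pi/5$ and base angles $3\pi/10$ in each triangle), which is finite, simply connected, and conformally nonpositively curved: its free faces are the base edges, far from the positively curved apex, and nothing in the apex link detects them. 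So positive curvature at a vertex gives no local handle on a free face, and a straight segmental path launched from such a vertex could, for all your argument shows, wander forever or close up; ruling that out is precisely the global argument the paper runs, in the contrapositive direction: \emph{assume} there is no free face (this hypothesis is what makes Lemma~\ref{immersed} available, so that straight paths can always be continued), use finiteness to close a straight path into an injective loop, take a reduced Van Kampen diagram for it --- which \emph{is} planar --- and derive the Gauss-Bonnet contradiction there. Note also that you have the difficulty inverted: preserving conformal nonpositive curvature under an elementary collapse is the easy part (a collapse only deletes vertices and edges from links, which can neither create new cycles nor shorten existing ones), whereas producing the free face is where all the work lies, and your mechanism for producing it does not work.
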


\begin{lemma}\label{immersed}
    Let $X$ be a conformally nonpositively curved complex without free faces, and let $v$ be a vertex of $X$. For each point $x$ on an edge in $Lk_X(v)$, there is a point $y$ in $Lk_X(v)$ lying at a distance $\geq\pi$ away from $x$. 
\end{lemma}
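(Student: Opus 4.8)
The plan is to localise to the connected component $L$ of $Lk_X(v)$ containing the edge on which $x$ lies. Since distances are only defined within a single component, it suffices to produce $y \in L$ with $d(x,y) \ge \pi$, and note that $L$ contains at least one edge (as $x$ lies on one). The argument has two distinct ingredients: first a purely combinatorial consequence of the no-free-faces hypothesis, and then a metric argument on $L$ using the link condition.

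First I would establish that $L$ has minimum vertex degree at least $2$. A vertex $u_e$ of $Lk_X(v)$ corresponds to an edge $e$ of $X$ at $v$, and its degree equals the number of corners at $v$ having $e$ as a side. Reading off the boundary circuits of the faces of $X$, each traversal of $e$ by a face boundary contributes exactly one such corner at $v$, namely the corner at the $v$-endpoint of that traversal; hence the degree of $u_e$ in $Lk_X(v)$ equals the total number of face-boundary traversals of $e$. An edge traversed exactly once is precisely a free face of dimension $1$, so the hypothesis that $X$ has no free faces forces this count to lie in $\{0\}\cup\{2,3,\dots\}$. Every vertex of $L$ is incident to an edge of $L$ and so has degree $\ge 1$; the count is therefore not $0$, hence is $\ge 2$. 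In particular $L$ is not a tree.

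With this in hand I would argue by contradiction. As $L$ is a finite metric graph it is compact, so $M := \max_{y\in L} d(x,y)$ is attained; if $M \ge \pi$ we are done, so suppose $M < \pi$ and let $z$ realise the maximum. The goal is to produce two shortest paths from $x$ to $z$ arriving along distinct edges. If $z$ lies in the interior of an edge $[p,q]$, then $d(x,\cdot)$ restricted to that edge is the minimum of the two affine functions induced by its endpoints, and since $z$ is a local maximum, both the geodesic from $x$ to $p$ extended by the segment $[p,z]$ and the geodesic from $x$ to $q$ extended by $[q,z]$ are shortest paths of length $M$; a short check shows the chosen geodesics to $p$ and $q$ do not themselves run along $[p,q]$. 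If instead $z$ is a vertex, I would use that $\deg z \ge 2$: maximality of $z$ forces $d(x,u'') + \ell(u'',z) = M$ for every neighbour $u''$ joined to $z$ by an edge of length $\ell(u'',z)$, so every edge at $z$ carries a shortest path from $x$ to $z$, and choosing two such edges gives two geodesics arriving along distinct edges. In either case, concatenating one geodesic with the reverse of the other yields a closed walk based at $x$ of length exactly $2M$ that arrives at and leaves $z$ along different edges; after cancelling a maximal common prefix it reduces to a nontrivial closed walk, which therefore contains an embedded cycle of length $\le 2M < 2\pi$. This contradicts the link condition, completing the proof.

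The main obstacle I anticipate is the careful bookkeeping rather than any single hard idea. On the combinatorial side one must verify precisely that the link-degree of $u_e$ equals the number of face-boundary traversals of $e$, accounting for loops and for faces traversing $e$ several times. On the metric side the delicate point is to guarantee that the length-$2M$ closed walk genuinely contains an \emph{embedded} cycle, i.e. that it does not collapse by backtracking; this is exactly what fails in the excluded degree-$1$ case, where the farthest point $z$ could be a dead end admitting no second geodesic, and it is precisely there that the no-free-faces hypothesis is indispensable.
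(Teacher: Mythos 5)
Your proposal is correct, but it takes a genuinely different route from the paper's proof. Both arguments share the same first step: the absence of free faces in $X$ forces every vertex in the relevant component of $Lk_X(v)$ to have degree $\geq 2$ (the paper phrases this as the link having no free face of dimension $0$; your traversal count makes it explicit, though note that an edge of $X$ forming a loop at $v$ contributes two link-edge-ends per traversal --- this does not affect the conclusion). After that the strategies diverge. The paper argues directly: since no link vertex has degree $\leq 1$, there exists a shortest immersed closed walk of positive length based at $x$; this walk contains a cycle, of length $\geq 2\pi$ by the link condition; the point $y$ halfway around that cycle satisfies $d(x,y)\geq\pi$ by an exchange argument --- any connecting path of length $<\pi$ could be closed up along one of the two halves of the walk (immersedness at $y$ guarantees at least one choice avoids backtracking), producing a shorter immersed closed walk and contradicting minimality. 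You instead argue variationally: assuming the farthest point $z$ lies at distance $M<\pi$, you produce two geodesics from $x$ to $z$ arriving along distinct edge directions, and their concatenation, after cyclic reduction, contains an embedded cycle of length $\leq 2M<2\pi$, contradicting the link condition outright. Your approach trades the paper's minimal-walk/exchange machinery for a compactness-plus-farthest-point argument and exhibits an explicit short cycle rather than contradicting minimality; both are sound, and the downstream use in Theorem 3.5 needs only existence of some $y$ at distance $\geq \pi$, so either works. Two points in your write-up need the care you anticipated: if $z$ lies in the interior of the very edge containing $x$, the restricted distance function is not merely the minimum of the two affine functions from the endpoints (the direct segment from $x$ competes too), but your needed conclusion --- geodesics arriving from both sides at an interior local maximum --- still holds since that restriction is piecewise linear with slopes $\pm 1$; and if $z$ is a vertex carrying a loop, the identity $d(x,u'')+\ell(u'',z)=M$ fails for the loop neighbour, but only because maximality of $z$ is then contradicted immediately, which closes that sub-case even faster.
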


\begin{proof}
    Because $X$ has no free face, $Lk_X(v)$ cannot contain a free face of dimension 0. Hence, there is a shortest immersed walk of positive length starting and ending at $x$. This walk will contain a cycle, which because of the link condition has length $\geq2\pi$. Consider a point $y$ which is at a distance $\geq\pi$ away from the first point on the walk belonging to this cycle, the distance being measured along the cycle. Denote the subwalk from $x$ to $y$ in the immersed walk by $p$ and the subwalk from $y$ to $x$ by $q$. Denote the walks obtained by reversing their directions by $p^{-1}$ and $q^{-1}$ respectively.
    
    Since the walk $p$ is followed by $q$ in an immersed walk, the last visited edges of $p$ and $q^{-1}$ must be distinct. Hence, for each walk between $x$ and $y$, continuing the walk along $p^{-1}$ or $q$ will in at least one of the cases yield an immersed walk. In particular, if there were a walk $r$ of distance $<\pi$ between $x$ and $y$, continuing it along $p^{-1}$ or $q$ we can obtain an immersed walk based at $x$ of shorter distance, a contradiction. Hence, $y$ must lie at a distance $\geq\pi$ away from $x$. 
\end{proof}

\begin{proofof}{Theorem~\ref{mainthm}} 
Let $X$ be a finite, simply connected conformally nonpositively curved complex without free faces. If $X$ is 1-dimensional, by the simple connectivity, $X$ must be a single point, since a tree with $n\geq 2$ vertices would have free faces of dimension $0$.

Otherwise, we can pick a 2-dimensional face $f_1$ of $X$ and create a straight segmental path on $X$ as follows: Pick a starting point $v_0$ on the face and continue along a line segment $l_1$ on the face. As soon as we hit the interior of an edge $e$, the absence of free faces allows us to pick an adjacent face $f_2$, which is either different from $f_1$ or has multiple adjacencies with $e$. 

Denote the point of intersection between $e$ and $l_1$ by $v_1$. Continue the segmental path from this point, along a line segment $l_2$ in $f_2$, such that the segmental path is straight at $v_1$.

Now assume that the line segment $l_1$ hits a vertex $v_1$ instead of the interior of an edge. Consider the point $x$ in $Lk_X(v_1)$ corresponding to the segment $l_1$. By Lemma~\ref{immersed}, there is a point $y$ in $Lk_X(v_1)$ lying at a distance $\geq\pi$ from $x$. This point will in turn be associated to the direction of a line segment on a particular face adjacent to $v_1$. Continue the path from $v_1$ along this new line segment. If this segment goes along an edge, we continue the segment until we hit the next vertex, and repeat the previous step with this new segment.

In each step, having traversed a line segment on a face, whenever we hit a vertex or the interior of an edge we repeat the above steps accordingly. From this procedure, we will obtain a straight segmental path on $X$.

Since $X$ is finite, we will eventually come to a point where the segmental path is self-intersecting, or reaches the same edge $e'$ a second time. As soon as either of these cases occurs, we stop the procedure and denote the terminal vertex of the segmental path by $v_n$. Denote this segmental path by $q$. In the first case, we form a closed path $\gamma$ by following the part of $q$ starting and ending at the point of self-intersection. In the second case, let $i$ be the largest value less than $n$ such that $v_i$ lies on $e'$. We form a closed path $\gamma$ by taking the subpath consting of vertices $v_i,\dots,v_n$, and continuing it along $e'$ to connect the endpoints $v_n$ and $v_i$, see Figure~\ref{loop}.

\begin{figure}[h]
\caption{Forming a loop $\gamma$ by connecting a subpath of $q$ along an edge.}
\label{loop}
\includegraphics[width=15cm]{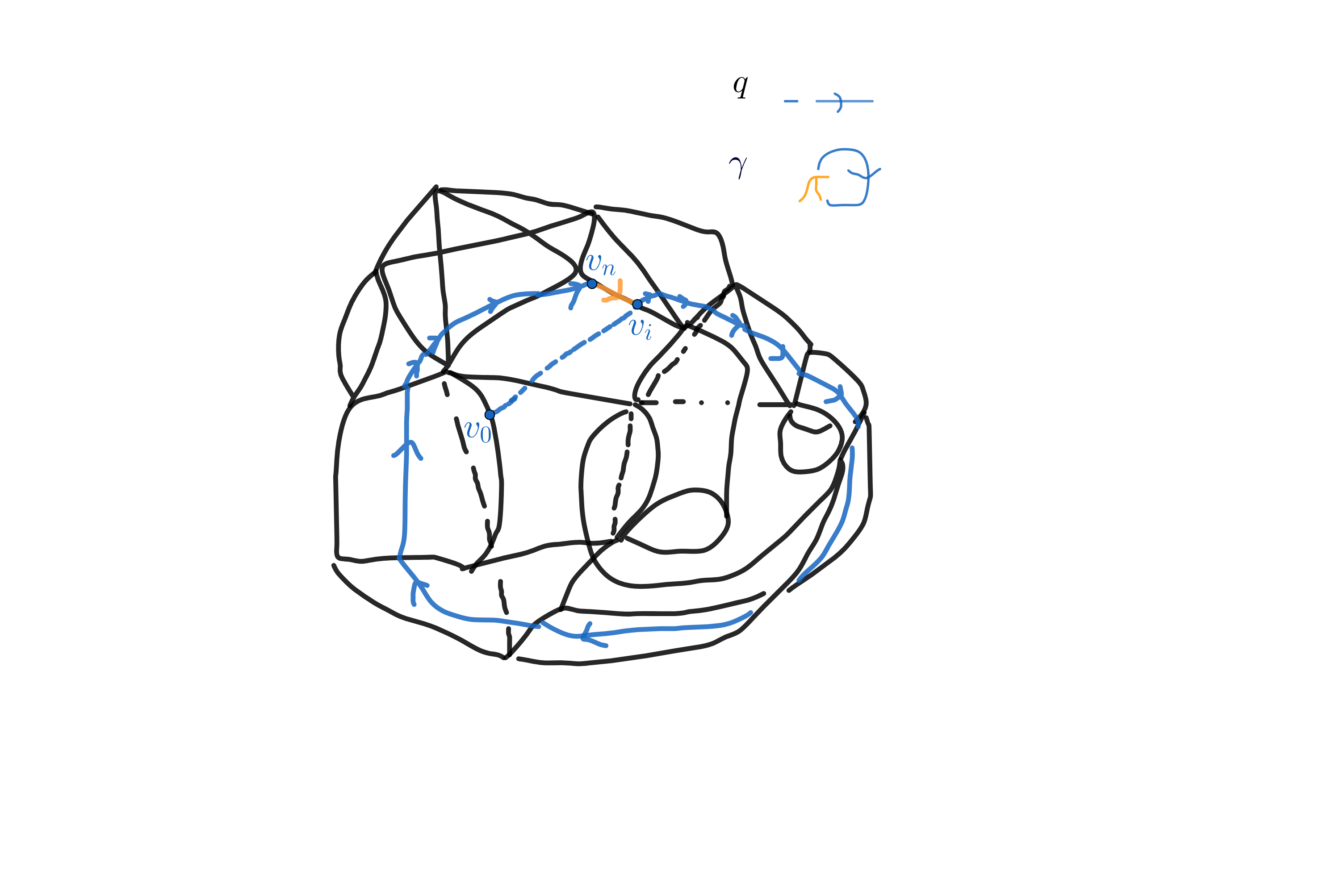}
\centering
\vspace{-100pt}
\end{figure}

Let $X'$ be a segmental subdivision of $X$ such that $\gamma$ is an edge loop in $X'$. The 2-complex $X'$ is simply connected, homeomorphic to $X$ and without free faces. Moreover, since $X'$ is a segmental subdivision, it inherits a conformally nonpositively curved angle structure.

By Van Kampen's lemma, since $\gamma$ is null-homotopic, there exists a reduced Van Kampen diagram $D$ for $\gamma$. Denote by $\tau$ the associated combinatorial map $D\to X'$. Since $\gamma$ is injective by construction, the diagram $D$ is homeomorphic to a disk. Pulling back the angles from $X$ to $D$, we obtain an angle structure on $D$.

When $D$ is reduced, the map $\tau$ is a near-immersion. This means that the link of a vertex $v$ in $D$ locally embeds into the link of the vertex $\tau(v)$ in $X$. In particular, a cycle in $Lk_D(v)$ will map onto a circuit of $Lk_X(\tau(v))$, and will thus have length $\geq 2\pi$. A path between two vertices in $Lk_D(v)$ corresponding to boundary edges of $D$ will be mapped to a walk between vertices corresponding to edges of $\gamma$ in $Lk_X(\tau(v))$. Hence, the interior vertices of $D$ will have an angle sum $\geq 2\pi$. Moreover, since we assumed $q$ to be straight, the angle sums of the boundary vertices of $D$ will all be $\geq \pi$, with possible exceptions for at most two vertices. These exceptional vertices correspond to the endpoints of the subpath of $q$ belonging to $\gamma$.

The links of interior vertices of $D$ are cycles, thus having Euler characteristic $0$. The links of boundary vertices are paths, with Euler characteristic $1$. Hence, the curvatures of the interior vertices of $D$ and of all but at most two vertices on the boundary are $\leq 0$.

If the angles of the exceptional vertices were to have value $0$, the adjacent edges of $\gamma$ would have to overlap, contradicting the injectivity of $\gamma$. Hence, the sum of curvatures of the exceptional vertices must be strictly less than $2\pi$. 

By Theorem \ref{comb}, we obtain the contradictory inequality $$2\pi > \sum_{v\in V(D)}\kappa(v) = 2\pi.$$

In conclusion, each simply connected conformally nonpositively curved 2-complex $X$ must have a free face of dimension $1$ whenever $X$ has 2-dimensional faces. Performing an elementary collapse will result in a complex which is homotopy equivalent to $X$ and with the number of cells decreased by $2$. Moreover, since an elementary collapse does not create any new cycles in the vertex links, the resulting complex will also be conformally nonpositively curved. Thus, we may continue collapsing the complex until the resulting complex is 1-dimensional, and hence is a single point. This shows that $X$ is collapsible.
\end{proofof}

\section{Conformally negatively curved complexes with fundamental group $\mathbb{Z}$}

In this section, we prove a similar result to Theorem \ref{mainthm} for conformally negatively curved complexes with fundamental group $\mathbb{Z}$.

Let $\tilde{X}^{(1)}$ be the 1-skeleton of the universal cover of a 2-complex $X$. We will assign a path metric $d$ to $\tilde{X}^{(1)}$ by assigning all the edges length $1$.

We now state our theorem (same as Theorem \ref{2thm}) which classifies the conformally negatively curved complexes with fundamental group $\mathbb{Z}$.

\begin{theorem}\label{negz}
    A finite, conformally negatively curved complex $X$ with fundamental group $\mathbb{Z}$ can be collapsed to a 1-dimensional cycle.
\end{theorem}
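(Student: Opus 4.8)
The plan is to follow the roadmap sketched in the introduction: analyze the two ends of the universal cover $\tilde{X}$ and show that, under the hypotheses, the only way the complex can fail to reduce to a $1$-dimensional cycle is obstructed. The starting point is that $\pi_1(X) = \mathbb{Z}$ acts on $\tilde{X}$ by deck transformations, generated by a single translation $t$. Since $X$ is finite and $\tilde{X}/\mathbb{Z} = X$ is compact, $\tilde{X}$ is a quasi-line: it has exactly two ends, and the $\mathbb{Z}$-action moves mass from one end toward the other. I would first make this precise by recording that $\tilde{X}^{(1)}$ with the path metric $d$ is quasi-isometric to $\mathbb{R}$, so that the two ends are well-defined and the generator $t$ acts as a translation along this quasi-line.

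\textbf{Reducing to the two cases.}

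I would then split into cases according to the maximal number of $2$-cell adjacencies along an edge. The first goal is to eliminate the case where some edge $e$ has $3$ or more adjacencies with $2$-cells; the introduction indicates this is handled by a straight-line argument. Concretely, I would lift a straight segmental path (as constructed in the proof of Theorem~\ref{mainthm}) to the universal cover, obtaining a bi-infinite or long straight segmental path $L$ in $\tilde{X}$. Using the \emph{negative} curvature hypothesis (strict inequality $> 2\pi$ in the link condition) together with the Combinatorial Gauss-Bonnet machinery, the claim I want is that a straight line cannot have both of its ends converging to the \emph{same} end of $\tilde{X}$; if it did, one could close it up against a deck translate and produce a reduced disc diagram whose curvature sum violates Theorem~\ref{comb}, now with a strict deficit coming from the negative curvature. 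An edge with $3$ or more face-adjacencies would force such a same-end straight line, giving the contradiction and ruling this case out.

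\textbf{The remaining case.}

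Once every edge has at most $2$ adjacent $2$-cells, $X$ is (away from a collapsible part) a surface-like complex. Here I would invoke Theorem~\ref{mainthm} and the structure of compact surfaces: after collapsing all free faces, the remaining complex either is lower-dimensional or is a closed/compact surface. Since $\pi_1(X) = \mathbb{Z}$ and no compact surface has fundamental group $\mathbb{Z}$, the surface case is impossible, so the collapsing must terminate at a $1$-dimensional complex carrying the full $\mathbb{Z}$; a $1$-dimensional complex with $\pi_1 = \mathbb{Z}$ is homotopy equivalent to a circle, and after collapsing its trees it is a $1$-dimensional cycle. Combining, $X$ collapses to a $1$-dimensional cycle.

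\textbf{The main obstacle.}

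The hard part will be the straight-line argument excluding same-end lines. Controlling \emph{which end} the two ends of a straight line converge to requires relating the combinatorial geometry (straightness in vertex links) to the coarse geometry of the quasi-line, and then extracting a genuine contradiction from Gauss-Bonnet when the line is closed up using a deck transformation. Making the diagram \emph{reduced} and ensuring that closing up the straight line along a translate does not introduce uncontrolled positive curvature at the seam — while genuinely exploiting the strict inequality from conformal negative curvature to beat the $2\pi$ bound — is where the real work lies; the surface-classification step, by contrast, should be routine given Theorem~\ref{mainthm}.
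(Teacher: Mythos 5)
Your overall architecture matches the paper's: the same two-case split (every edge has at most two 2-cell adjacencies versus some edge with three or more), and your first case is essentially the paper's argument (the union of the 2-cells decomposes into compact surfaces, which cannot have fundamental group $\mathbb{Z}$, hence are simply connected, hence have free faces by Theorem~\ref{mainthm}, a contradiction). However, in the second case --- which is the heart of the theorem --- your proposal has a genuine gap, and the mechanism you describe for the contradiction is wrong as stated. You claim that closing up a same-end straight line produces a reduced disc diagram ``whose curvature sum violates Theorem~\ref{comb}.'' It does not: the seam joining the two rays has some bounded but possibly large number of edges, say at most $N$, so up to $N+1$ boundary vertices of the diagram can each carry curvature close to $\pi$. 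Gauss--Bonnet then yields $2\pi \leq (N+1)\pi - B\varepsilon$ (where $B$ is the number of interior vertices and $-\varepsilon$ the uniform curvature bound from strict negativity), which is not a contradiction for $N \geq 2$; it is merely an upper bound on $B$. The paper's actual contradiction is a counting argument layered on top of this: the bound on $B$, together with a uniform bound on vertex degrees (again via Gauss--Bonnet, using the minimal angle $\alpha$) and on the number of sides of faces, bounds the \emph{total size} of the diagram independently of how far out along the rays one closes up; since the boundary length grows without bound as one closes up farther and farther away, this is the contradiction. You flag this seam issue yourself as ``the main obstacle,'' but flagging it is not resolving it, and your proposed resolution (a direct curvature deficit) cannot work.

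Two further ingredients of the paper's Case 2 are absent from your sketch. First, the uniformly short seam: one needs that every vertex far out on the first ray lies within a \emph{uniformly} bounded number $N$ of edges from the second ray; the paper extracts this from the Schwarz--Milnor lemma (the 1-skeleton of $\tilde{X}$ is quasi-isometric to $\mathbb{Z}$) together with \cite[Proposition 8.29]{BH} identifying the ends. You mention the quasi-isometry to $\mathbb{R}$ at the outset but never use it for this purpose, and without the uniform $N$ even the counting argument collapses. Second, the source of the same-end straight line: the paper launches \emph{three} straight rays from a point in the interior of the offending edge, one through each face-adjacency, each meeting the edge at angle $\pi/2$ (so that any two concatenate to a straight line); it then proves that straight segmental paths in $\tilde{X}$ are proper rays (via the Theorem~\ref{mainthm} argument), so that ``belonging to an end'' makes sense, and applies pigeonhole with two ends. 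Your proposal asserts that an edge with three adjacencies ``would force'' a same-end line without this construction. Finally, your idea of closing the line ``against a deck translate'' diverges from the paper and is problematic on its own terms: a line together with its translate naturally bounds an annular region, whereas Theorem~\ref{comb} applies only to simply connected planar diagrams, so some additional device would be needed to even state the Gauss--Bonnet input in that setting.
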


The proof will be divided into 2 parts. First, we consider the case where all edges have at most two adjacencies with 2-cells. Next, we consider the case where there is an edge with three or more adjacencies with 2-cells.\newline

\begin{proofof}{Theorem \ref{negz}}
Assume that $X$ has no free faces.

\textbf{Case 1.} Assume that no edge has three or more adjacenies with 2-cells. Since there are no free faces, all edges of $X$ have $0$ or $2$ adjacencies with 2-cells. Assume that $X$ contains at least one 2-cell. Let $X'$ be the subcomplex of $X$ consisting of the union of the 2-cells of $X$. Since $X$ is obtained from $X'$ by attaching 1-dimensional cells to $X'$ at vertices of $X'$, the fundamental group of $X'$ is either trivial or $\mathbb{Z}$.

Note that $X'$ can be obtained by identifying a finite number of points on a (possibly disconnected) compact surface. By separating these points, we obtain a union of compact, connected surfaces whose fundamental groups must be trivial since compact surfaces cannot have fundamental group $\mathbb{Z}$. Each of these surfaces admits a nonpositively curved structure inherited from $X'$. Theorem \ref{mainthm} tells us that these surfaces each has a free face, which must be 2-dimensional. This means that $X'$, and hence $X$, also has a free face, a contradiction.

Thus, $X$ must be 1-dimensional. We conclude by noting that any 1-dimensional connected complex with fundamental group $\mathbb{Z}$ containing no vertex of degree $1$ must be a cycle.

\textbf{Case 2.} Assume now that there is an edge on $X$ with three or more adjacencies with 2-cells. In this case, we consider three infinite segmental paths starting from a point in the interior of this edge, and choose a different adjacency for each segmental path to traverse when leaving the edge. Moreover, we assume that each segmental path makes an angle $\pi/2$ with the starting edge on the face they first traverse, and that each segmental path is straight. Let $r_1, r_2, r_3$ be the lifts of the respective segmental paths to the universal cover $\tilde{X}$, where the angle structure on $\tilde{X}$ is inherited from $X$.

We will show that any straight segmental path $r$ on $\tilde{X}$ is a proper ray. Indeed, if there is a compact set $K \subset \tilde{X}$ such that $r$ visits $K$ an infinite number of times, the segmental path must either intersect itself or hit the same edge twice in $K$. An argument as in the proof of Theorem \ref{mainthm} shows that we can form a nullhomotopic loop from $r$ which is straight at every point except for at most $2$ points. This will again contradict Theorem \ref{comb}.

Since the fundamental group of $X$ is $\mathbb{Z}$, the universal cover $\tilde{X}$ has $2$ ends. Since $r_1,r_2,r_3$ are proper rays, the pigeonhole principle implies that two of the segmental paths, say $r_1$ and $r_2$, must belong to the same end. Let the vertices of $r_1$ and $r_2$ be $(v_i)_{i\geq0}$ and $(w_i)_{i\geq0}$, respectively, in the order they appear on the segmental paths. We want to show that there is a positive integer $N$ such that, for all $n\in \mathbb{Z}_{\geq 0}$, the vertex $v_n$ can be adjoined by a path to $w_i$ for some $i\geq 0$ along an edge path in $\tilde{X}$ consisting of $\leq N$ edges.

Indeed, by the Schwarz-Milnor lemma, there is a ($\lambda,\varepsilon$)-quasi-isometry $f$ from the 1-skeleton $\tilde{X}^{(1)}$ to $\mathbb{Z}$. By \cite[p.145, Proposition 8.29]{BH}, the ends of $\tilde{X}^{(1)}$ correspond to the ends of $\mathbb{Z}$ under $f$. Assume that the end in $\mathbb{Z}$ corresponding to $r_1$ and $r_2$ is the end corresponding to an increasing sequence of positive integers. Then, for each $m \in \mathbb{Z}$, there is a positive integer $m'$ such that $f(v_n), f(w_n) \geq m$ for all $n\geq m'$. Assume that $M\geq 0$ is such that $f(v_n) \geq f(w_0)$ for all $n\geq M$. Let $L\geq M$ be a positive integer, and let $k = f(v_L)$. 

Let $l$ be the maximum number of edges of a Euclidean polygon associated to a face in $X$. The distance between any two nearby points $w_i, w_{i+1}$ is less than or equal to $l/2$ for $i\geq 0$. Hence, since $f$ is a ($\lambda,\varepsilon$)-quasi-isometry, the distance between any two points $f(w_i), f(w_{i+1})$ is less than or equal to the constant $A = \lambda l/2 + \varepsilon$. Thus, using the facts that $k \geq f(w_0)$ and that $f(w_i) \to \infty$ as $i \to \infty$, there must be a value of $i\geq 0$ such that $|k-f(w_i)|\leq A$.

For this value of $i$, we have $d(v_L,w_i) \leq \lambda(|k-f(w_i)|+\varepsilon)$. In particular, we can find an edge path between $v_L$ and $w_i$ traversing at most $N_0 = \lceil\lambda(A+\varepsilon) \rceil + 1$ edges. Letting $c$ be the maximum value of the distances from $v_n$ to $r_2$ for $0\leq n \leq M$, the value $N = \max\{N_0,\lceil c \rceil \}$ is an upper bound for the distance between any point $v_n$, for some $n\in \mathbb{Z}$, to $r_2$.

Let $L$ be a positive integer, and let $p$ be a path from $v_L$ to $w_i$, for some $i\in \mathbb{Z}_{\geq 0}$, which is contained in an edge path of $\tilde{X}$ consisting of at most $N$ edges. Let $\eta$ be the concatenation of the subpaths $p_1$ and $p_2$ of $r_1$ resp. $r_2$ consisting of the vertices $v_0,\dots,v_L$ and $w_0,\dots,w_i$, respectively. Form a loop $\gamma$ by concatenating $p$ and $\eta$. We consider a minimal subdivision $\tilde{X}'$ of $\tilde{X}$ so that $\gamma$ becomes an edge loop in $\tilde{X}'$.

The loop $\gamma$ is nullhomotopic, and so admits a reduced van Kampen diagram $D$ equipped with a near-immersion $\tau \colon D \to \tilde{X}'$.

Since $X$ is conformally negatively curved, there exists a constant $\varepsilon > 0$ such that all vertices of $X$ have curvature $\leq -\varepsilon$.

Let $B$ denote the number of vertices of $D$ which do not lie on the boundary $\partial D$. The curvatures of vertices on $\partial D$ mapping under $\tau$ to vertices on $\eta$, not being endpoints of this path, will be $\leq 0$. Hence, there are at most $N+1$ vertices on $\partial D$ with positive curvature. Thus, the sum of curvatures of the vertices on $\partial D$ is $\leq (N+1)\pi$. For vertices in the interior of $D$, the curvature is $\leq -\varepsilon$. Theorem \ref{comb} gives us $$2\pi = \sum_{v\in V(D)} \kappa(v) \leq (N+1)\pi -B\varepsilon.$$ Hence, we obtain the bound $B \leq (N-1)\pi/\varepsilon$.

Next, we prove that the value $L$ cannot be arbitrarily large. To do this, will show that the part $Y$ of $\partial D$ mapping to $\eta$ under $\tau$ cannot contain arbitrarily many edges. To begin with, we will find a bound $D'$ on the degree of a vertex of $D\backslash Y$, independent of the choice of $L$. Let $\alpha$ be the value of the smallest angle in the complex $X$. Note that the angles of corners at a vertex in $D\backslash Y$ are inherited from angles of $X$, by the minimality of our subdivision.

If there is a vertex in $D\backslash Y$ of degree $n$, the curvature of this vertex is $\leq 2\pi-n\alpha$. Theorem \ref{comb} gives us 
\begin{equation}\label{ineq}
    2\pi = \sum_{v\in V(D)} \kappa(v) \leq (N+1)\pi + (2\pi-n\alpha) = (N+3)\pi - n\alpha.
\end{equation}

This yields a contradiction for large $n$.

We write $D'$ for the obtained upper bound for the degree of a vertex in $D\backslash Y$, being independent of $L$.

Recall that $l$ denotes the maximal number of edges in a Euclidean polygon associated to a face in $X$. We know that $\eta$ is a straight segmental path, since the angle between the paths $p_1$ and $p_2$ at $v_0=w_0$ is equal to $\pi$. This means, as in the proof of Theorem \ref{mainthm}, that $\eta$ does neither self-intersect, nor cross an edge of $\tilde{X}$ more than once. So $\eta$ cannot visit a face of $\tilde{X}$ more than $l/2$ times. This means that there is an upper bound $E$ on the number of edges of a Euclidean polygon associated to a face in $\tilde{X}'$, and hence on the number of edges of a face in $D$.

Next, we note that each face of $D$ has at least one vertex which does not belong to $Y$. Consequently, the number of faces of $D$ is bounded above by the number of corners of vertices in $D\backslash Y$, and hence by the constant $(B+N-1)D'$. Finally, the number of edges of $D$ is less than or equal to $E$ times the number of faces. In particular, we obtain the bound $$L\leq (B+N-1)D'/E.$$

So $L$ cannot be arbitrarily large, proving that $X$ cannot have an edge with three or more adjacencies with 2-cells. This means that $X$ must have a free face unless $X$ is 1-dimensional, and can thus be collapsed to an 1-dimensional cycle by a similar argument as in the proof of Theorem \ref{mainthm}.

\end{proofof}

\noindent\textsc{Lycka Drakengren},
\textsc{Trinity College},
\textsc{Cambridge CB3 9DH, UK}\newline
\textit{Email address}: \texttt{lmvd2@cam.ac.uk}

\end{document}